\title[Maximal Restriction Estimates]{Maximal restriction estimates and the maximal function of the Fourier transform}
\author{Jo\~ao P. G. Ramos}
\newcommand{\R}{\mathbb{R}}
\newcommand{\C}{\mathbb{C}}
\newcommand{\mmd}{\mathrm{d}}
\newtheorem{theorem}{Theorem}
\newtheorem{lemma}{Lemma}
\def\Xint#1{\mathchoice
{\XXint\displaystyle\textstyle{#1}}%
{\XXint\textstyle\scriptstyle{#1}}%
{\XXint\scriptstyle\scriptscriptstyle{#1}}%
{\XXint\scriptscriptstyle\scriptscriptstyle{#1}}%
\!\int}
\def\XXint#1#2#3{{\setbox0=\hbox{$#1{#2#3}{\int}$}
\vcenter{\hbox{$#2#3$}}\kern-.5\wd0}}
\def\dashint{\Xint-}
\begin{document}
\begin{abstract}
We prove a maximal restriction inequality for the Fourier transform, providing an answer to a question left open by M\"uller, Ricci and Wright \cite{MRW}. Our methods are similar to the ones in \cite{MRW} and \cite{CarlSjoe}, with the addition 
of a suitable trick to help us linearise our maximal function. In the end, we comment on how to use the same trick in combination with Vitturi's approach \cite{Vitturi} to obtain a stronger high-dimensional result.  
\end{abstract}
\maketitle

\section{Introduction} 

Restriction estimates for the Fourier transform have been a very active topic within harmonic analysis for over the past 40 years. Basically, one inquires whether an inequality of the form 

\begin{equation}\label{restrict} 
\|\widehat{f}|_{S}\|_{L^q(S,\mmd\sigma)} \le C_{p,d} \|f\|_{L^p(\R^d)}
\end{equation}
can hold on a hypersurface $S$, where $\sigma$ stands for the standard surface measure on $S$, which is the same as the arclength measure for the case of plane curves. Here we shall focus on compact hypersurfaces $S$ with non-vanishing curvature, the typical example being the sphere $\mathbb{S}^{d-1}.$ 
By taking examples of functions (either the so called \emph{Knapp} examples or constant functions; see, e.g., \cite[Section~4]{Tao}), one finds out that a \emph{necessary} condition for such inequalities to hold is that 
\begin{equation}\label{range}
1 \le p < \frac{2d}{d+1} \text{ and } p' \ge \frac{d+1}{d-1}q, 
\end{equation} 
where $\frac{1}{p} + \frac{1}{p'} = 1.$ The \emph{restriction conjecture} then asserts that the above conditions are also \emph{sufficient}. \\ 

The first manifestation of such a restriction principle, in a range smaller than \eqref{range}, was perhaps the result of Fefferman and Stein (see \cite[page~28]{Fefferman}), where 
an estimate in all dimensions for $q=2$ was proven, this estimate being sharpened to the optimal range of $p$ for such $q$ by Tomas \cite{Tomas}, who credits Stein for the endpoint result. 
For the sphere (and, in general, for compact hypersurfaces with non-vanishing curvature), it reads that
\[
\|\widehat{f}|_{S}\|_{L^2(S,d\sigma)} \le C_{p,d} \|f\|_{L^p(\R^d)},
\]
whenever $1 \le p \le \frac{2(d+1)}{d+3}.$ \\ 

Regarding ranges of exponents, for dimension $d \ge 3,$ Problem \eqref{restrict} is still open, with new technology being developped continously to improve ranges of exponents; see, for instance, \cite{Tao, Guth1, Guth2} for further developments in this subject. \\ 

For dimension 2, however, Problem \eqref{restrict} has been completely solved, as we observe that the conditions can be rewritten as follows: 
\begin{equation}\label{ranger}
1\le p < \frac{4}{3}, \, p' \ge 3q. 
\end{equation}
In the non-endpoint case $ p' > 3q$, the result is due to Fefferman \cite[page~33]{Fefferman}, and the endpoint to Zygmund \cite{Zygmund} and Carleson and Sj\"olin \cite{CarlSjoe}. Later, Sj\"olin \cite{Sjoelin} also extended these 
results to other classes of curves. \\

In \cite{MRW}, D. M\"uller, F. Ricci and J. Wright consider a slight strenghtening of the restriction properties of the Fourier transform in two dimensions: namely, they prove a maximal version of restriction estimates and conclude a differentiation result. Here, we shall state the result only in the case of $\mathbb{S}^1,$ for simplicity: 

\begin{theorem}\label{teoMRW}[M\"uller, Ricci, Wright \cite{MRW};\,2016] Let $\mathbb{S}^1$ be the unit circle in $\R^2$ and $f:\R^2 \to \C$ be a $L^p$ function. Assume that $1 \le p < \frac{8}{7}.$
Then, with respect to arclength measure, almost every point in $\mathbb{S}^1$ is a Lebesgue point for $\widehat{f}$ and the regularised value of $\widehat{f}$ at $x$ coincides with the restriction operator $\mathcal{R}f(x)$ for almost every $x \in \mathbb{S}^1.$ 
\end{theorem}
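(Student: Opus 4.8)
Since the conclusion is an almost-everywhere differentiation statement, the natural engine is a \emph{maximal} restriction estimate, and the plan is to reduce the theorem to such an estimate by a density argument. Fix a Schwartz function $\phi$ with $\int_{\R^2}\phi = 1$, put $\phi_t(\zeta) = t^{-2}\phi(\zeta/t)$, and let $\widehat f \ast \phi_t$ denote the ambient regularisation of $\widehat f$ in the frequency variable near $S = \mathbb{S}^1$. The starting point is the convolution identity
\[
(\widehat f \ast \phi_t)(\zeta) = \widehat{f\,\psi_t}(\zeta), \qquad \psi_t(y) = \psi(ty), \quad \psi = \check{\phi}, \quad \psi(0) = 1,
\]
which recasts the regularisation as the restriction of the spatially truncated transforms $\widehat{f\psi_t}$. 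On the dense class of Schwartz functions $\widehat f$ is continuous, so the conclusion is trivial there; and since $f\psi_t \to f$ in $L^p$ as $t\to 0$ while restriction is bounded $L^p \to L^2(S)$ for $1\le p \le \tfrac65$ (Tomas--Stein, which applies since $\tfrac87 < \tfrac65$), we have $\mathcal R(f\psi_t) \to \mathcal R f$ in $L^2(S)$. Thus the theorem follows, by the standard argument deducing almost-everywhere convergence from a maximal inequality together with convergence on a dense class, once we prove the maximal restriction estimate
\[
\Big\| \sup_{0 < t \le 1} \big| (\widehat f \ast \phi_t)|_S \big| \Big\|_{L^2(S,\mmd\sigma)} \le C\,\|f\|_{L^p(\R^2)}, \qquad 1 \le p < \tfrac87.
\]

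Next I would control the supremum by a \emph{vertical variation} built from Littlewood--Paley-type increments. Writing $G(t,\zeta) = \widehat{f\psi_t}(\zeta)$ and differentiating in the scale, $\partial_t\psi_t(y) = \tfrac1t\,\eta_t(y)$ with $\eta(y) = y\cdot\nabla\psi(y)$ and $\eta_t(y)=\eta(ty)$; note $\eta(0) = 0$, so $f\eta_s$ is a genuine spatial annulus concentrated at radius $|y|\sim 1/s$. Since $G(t,\cdot)\to\mathcal R f$ in $L^2(S)$, the fundamental theorem of calculus in $t$ gives, for each fixed $\zeta$,
\[
G(t,\zeta) = \mathcal R f(\zeta) + \int_0^t \widehat{f\,\eta_s}(\zeta)\,\frac{\mmd s}{s},
\]
whence $\sup_{0<t\le1}\big|G(t,\zeta) - \mathcal R f(\zeta)\big| \le \int_0^1 |\widehat{f\,\eta_s}(\zeta)|\,\tfrac{\mmd s}{s}$. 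By Minkowski's inequality, and since $\|\mathcal R f\|_{L^2(S)}\lesssim\|f\|_{L^p}$, it then suffices to prove a per-scale estimate of the form
\[
\big\| \mathcal R(f\,\eta_s) \big\|_{L^2(S,\mmd\sigma)} \le C\, s^{\delta}\,\|f\|_{L^p(\R^2)}, \qquad \delta = \delta(p) > 0,
\]
since $\int_0^1 s^\delta\,\tfrac{\mmd s}{s} < \infty$ then closes the argument.

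The per-scale estimate is where the curvature of $\mathbb S^1$ enters, and I would prove it by a $TT^\ast$ argument in the spirit of Carleson--Sj\"olin. By duality it is governed by the extension operator $\mathcal R^\ast g = \widehat{g\,\mmd\sigma}$ through
\[
\big| \langle \mathcal R(f\eta_s), g\rangle \big| = \Big| \int_{\R^2} f(y)\,\eta_s(y)\,\overline{\widehat{g\,\mmd\sigma}(y)}\,\mmd y \Big|.
\]
Since $\eta_s$ localises $y$ to the annulus $|y|\sim 1/s$, and $\widehat{g\,\mmd\sigma}$ inherits the $|y|^{-1/2}$ decay of $\widehat{\mmd\sigma}$ (visible in an averaged sense through the endpoint $L^4$ bound $\|\widehat{g\,\mmd\sigma}\|_{L^4(\R^2)} \lesssim \|g\|_{L^2(\mmd\sigma)}$ that underlies restriction for the circle), restricting the integral to this annulus gains a power of $s$ over the trivial estimate. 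Optimising the exponents in the resulting oscillatory-integral bound is what produces the per-scale gain $s^{\delta}$ with $\delta = \delta(p)>0$ precisely in the range $p < \tfrac87$.

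The main obstacle is exactly to make this curvature gain quantitative and uniform over the annuli, and to pin down the threshold at which $\delta(p) > 0$: applying restriction as a black box yields only $\|\mathcal R(f\eta_s)\|_{L^2(S)} \lesssim \|f\|_{L^p}$ with no decay, so the whole difficulty lies in extracting the extra half-power of $s$ and tracking how it competes against the $\tfrac{\mmd s}{s}$ summation. It is this interplay of scale-summation with the oscillation of $\widehat{\mmd\sigma}$ that forces the range down from $p<\tfrac43$ to $p < \tfrac87$, and it is here that the linearisation trick must be combined with the $TT^\ast$ analysis rather than invoking a single restriction estimate. A secondary, more routine point is to upgrade the convergence of the regularisation to the full Lebesgue-point property, which is handled by running the same maximal bound for the sublinear averages $\sup_{t}\big(|\widehat f - c|\ast\phi_t\big)$ and invoking the usual covering argument.
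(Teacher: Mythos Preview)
Your outline departs from the approach described in the paper, and the places where it is vague are precisely the places that carry the whole proof.

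First, the central per-scale estimate
\[
\big\|\mathcal R(f\,\eta_s)\big\|_{L^2(\mathbb S^1)}\le C\,s^{\delta}\,\|f\|_{L^p(\R^2)},\qquad \delta=\delta(p)>0,
\]
is asserted but not proven. Your heuristic is that $\widehat{g\,\mmd\sigma}$ ``inherits the $|y|^{-1/2}$ decay of $\widehat{\mmd\sigma}$''; but for general $g\in L^2(\mathbb S^1)$ this is simply false pointwise (take $g(\theta)=e^{iN\theta}$, whose extension concentrates near $|y|\sim N$), and the $L^4$ extension bound by itself gives no decay on annuli. By duality the per-scale estimate is equivalent to $\|\eta_s\,\widehat{g\,\mmd\sigma}\|_{L^{p'}}\lesssim s^{\delta}\|g\|_{L^2}$, and nothing in your argument extracts a power of $s$ here. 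You also never carry out the computation that would pin the threshold at $p<\tfrac{8}{7}$; that number appears by fiat.

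Second, and more seriously, what you call a ``secondary, more routine point'' is in fact the heart of the matter. Your whole mechanism rests on the identity $\widehat f\ast\phi_t=\widehat{f\psi_t}$, which requires the absolute value to sit \emph{outside} the convolution. The Lebesgue-point statement in the theorem needs control of the \emph{sublinear} maximal function
\[
\sup_{t}\;\phi_t\ast|\widehat f|\quad\text{(absolute value \emph{inside})},
\]
and one cannot ``run the same maximal bound'' for this object: once the modulus is inside, the Fourier identity is lost and the vertical-variation argument has nothing to act on. In the M\"uller--Ricci--Wright proof (as summarised in the paper) this is exactly where the exponent drops: they first prove the bound for the linear maximal operator $\mathcal M f=\sup_R\big|\int\chi_R\widehat f\big|$ in the \emph{full} restriction range $1\le p<\tfrac43$, and then pass to the sublinear operator via the pointwise inequality
\[
\sup_R\int\chi_R|\widehat f|\;\le\;\big(\mathcal M(f\ast\tilde f)\big)^{1/2},\qquad \tilde f(x)=\overline{f(-x)},
\]
followed by Young's inequality $\|f\ast\tilde f\|_r\le\|f\|_p^{2}$ with $\tfrac1r=\tfrac2p-1$. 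Requiring $r<\tfrac43$ is what gives $p<\tfrac87$. This squaring trick is the missing idea in your proposal; without it (or the linearisation with an auxiliary $g$ as in the paper's own improvement), the passage from the linear to the sublinear maximal function is not ``routine'' and your argument does not reach the Lebesgue-point conclusion.
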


The purpose of this note is to improve ranges of exponents of such maximal restriction results. Explicitly, our main result is:

\begin{theorem}\label{improvement} Theorem \ref{teoMRW} extends to $1\le p < \frac{4}{3}.$
\end{theorem}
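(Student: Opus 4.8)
The plan is to deduce Theorem~\ref{improvement} from a maximal restriction estimate and to prove that estimate in the whole range $1\le p<\frac43$ by bootstrapping from the sharp Carleson--Sj\"olin restriction theorem for $\mathbb{S}^1$. Fix a radial Schwartz approximate identity $\phi$ with $\int_{\R^2}\phi=1$ and set $\phi_t(y)=t^{-2}\phi(y/t)$. Exactly as in \cite{MRW}, Stein's maximal principle together with the density of Schwartz functions in $L^p(\R^2)$ reduces the Lebesgue-point and differentiation statement to the single inequality
\begin{equation}\label{plan-maximal}
\Big\|\sup_{t>0}\,\big|\widehat{f}*\phi_t\big|\Big\|_{L^q(\mathbb{S}^1,\,\mmd\sigma)}\le C_{p,q}\,\|f\|_{L^p(\R^2)}
\end{equation}
for some exponent pair $(p,q)$ admissible in \eqref{ranger}: once \eqref{plan-maximal} is known, the regularised values converge $\sigma$-almost everywhere and must agree with $\mathcal{R}f$, since they do so trivially for the dense class of Schwartz data. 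Thus the entire problem is to establish \eqref{plan-maximal}.

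First I would linearise the supremum. Selecting a measurable $t\colon\mathbb{S}^1\to(0,\infty)$ and using the continuity of $t\mapsto\widehat{f}*\phi_t(\xi)$ together with monotone convergence, it suffices to bound $\|A_tf\|_{L^q(\mathbb{S}^1)}$ uniformly over all such selections, where
\[
A_tf(\xi)=\widehat{f}*\phi_{t(\xi)}(\xi)=\int_{\R^2}f(x)\,\Psi\big(t(\xi)x\big)\,e^{-2\pi i x\cdot\xi}\,\mmd x=\mathcal{R}\big[f\,\Psi(t(\xi)\cdot)\big](\xi),
\]
and $\Psi=\widehat{\phi}$ is Schwartz with $\Psi(0)=1$. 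In other words, the linearised maximal operator is the restriction operator applied, at each base point $\xi$, to the modulated input $f\,\Psi(t(\xi)\cdot)$. This is the crucial new feature: \eqref{plan-maximal} does not follow verbatim from restriction precisely because the function being restricted depends on $\xi$ through the arbitrary, possibly wildly oscillating, factor $t(\xi)$.

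To handle this coupling I would dualise and then decompose in scale. By duality \eqref{plan-maximal} is equivalent to an $L^{q'}(\mathbb{S}^1)\to L^{p'}(\R^2)$ bound for the modulated extension operator $A_t^{*}g(x)=\int_{\mathbb{S}^1}\Psi(t(\xi)x)\,g(\xi)\,e^{2\pi i x\cdot\xi}\,\mmd\sigma(\xi)$, which is the form adapted to the oscillatory-integral machinery underlying the Carleson--Sj\"olin theorem; there the governing object is the stationary-phase decay $|\widehat{\mmd\sigma}(x)|\lesssim|x|^{-1/2}$, which in the unmodulated case yields the full range $p<\frac43$ via Hardy--Littlewood--Sobolev. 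The trick is to show that the smooth, uniformly bounded cut-offs $\Psi(t(\xi)x)$ may be inserted without spoiling this decay. I would split the sphere according to $t(\xi)\sim 2^{j}$, $j\in\Z$, into the disjoint sets $S_j=\{\xi:\ t(\xi)\sim 2^{j}\}$. On the small-scale sets the factor $\Psi(2^{j}x)\to 1$, so the contribution is essentially $\int_{S_j}|\mathcal{R}f|^{q}\,\mmd\sigma$, and disjointness in $\xi$ makes these pieces sum to at most $\|\mathcal{R}f\|_{L^q(\mathbb{S}^1)}^{q}$, already controlled by Carleson--Sj\"olin; on the large-scale sets the rapid decay of $\Psi$ confines $x$ to $|x|\lesssim 2^{-j}$, which both restores stationary-phase decay uniformly in $\xi$ and renders distinct scales nearly orthogonal, so that the single-scale restriction estimates sum without the logarithmic loss that would otherwise limit the range. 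It is this spatial localisation, forced by the modulation, that I expect to yield the gain over the exponent $\frac87$ of \cite{MRW}.

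The main obstacle is exactly the arbitrariness of $t(\cdot)$. Because $t$ is merely measurable, one cannot integrate by parts in $\xi$ in the phase, so the classical stationary-phase analysis of $\widehat{\mmd\sigma}$ is not available directly; the heart of the argument is to prove the single-scale estimates, and to sum them, uniformly over every measurable selection $t(\cdot)$. Controlling the transition between the small-scale regime, where $A_tf\to\mathcal{R}f$, and the decaying large-scale regime — without any loss that would prevent reaching the endpoint-adjacent range — is the crux, and is precisely what the linearising trick is designed to make tractable.
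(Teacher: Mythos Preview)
Your proposal has a genuine gap at the summing step, and the route is entirely different from the paper's.

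\textbf{The gap.} After the scale decomposition $S_j=\{\xi:t(\xi)\sim 2^j\}$ you have, by disjointness in $\xi$, $\|A_tf\|_{L^q(\mathbb{S}^1)}^q=\sum_j\|A_tf\|_{L^q(S_j)}^q$, but the only bound you offer for each summand is the black-box restriction estimate $\|A_tf\|_{L^q(S_j)}\le C\|f\,\Psi(2^j\cdot)\|_{L^p(\R^2)}\le C\|f\|_{L^p}$, which sums to infinity. Your two remedies do not close this: for $j\to+\infty$ the localisation $|x|\lesssim 2^{-j}$ gives only $\|f\,\Psi(2^j\cdot)\|_p\to 0$ with no rate (the rate depends on $f$, not on $j$ alone), and for $j\to-\infty$ the claim $\Psi(2^jx)\to 1$ is pointwise but not uniform in $x\in\R^2$, so the contribution is not ``essentially $\int_{S_j}|\mathcal{R}f|^q$'' in any summable sense. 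The asserted ``near orthogonality of distinct scales'' on the $L^{p'}(\R^2)$ side is exactly what would be needed and is left unproved; since $t(\cdot)$ is merely measurable there is no mechanism (stationary phase, Littlewood--Paley, etc.) in your outline that produces it. Also note that your maximal function carries the absolute value \emph{outside} the average; even if \eqref{plan-maximal} held, it yields convergence of regularised values but not the Lebesgue-point statement, which requires control of $\sup_t\,\dashint|\widehat{f}(\xi+y)-c|\,\mmd y$.

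\textbf{What the paper does instead.} The paper never decomposes in scale. It linearises with \emph{two} measurable choices, a rectangle $R(x)$ and a unimodular $g$ absorbing the absolute value, then passes to the adjoint $T$, an extension operator with amplitude $\widehat{a}_{z(t)}(\xi)$. The key move is the Carleson--Sj\"olin device of squaring $T$: $(Tf)^2$ becomes a two-dimensional oscillatory integral over $\Delta$, and the change of variables $(t,s)\mapsto z(t)+z(s)$ turns it into an operator of the type in Lemma~\ref{lemma1}. That lemma, via Plancherel and the \emph{strong maximal theorem}, gives the $L^2\to L^2$ bound uniformly in every measurable choice of bump (hence uniformly in $R(\cdot)$ and $g$), with no summation at all. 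Interpolating with the trivial $L^1$ bound and applying Hardy--Littlewood--Sobolev to the Jacobian $|\det(z'(t),z'(s))|^{1-p}\sim|\sin 2\pi(t-s)|^{1-p}$ yields the full range $p<4/3$. The point you are missing is that the arbitrary measurable bump is absorbed \emph{on the dual side} by the pointwise maximal-function bound, so the uniformity costs nothing; this replaces, in one stroke, the orthogonality you were hoping to manufacture across scales.
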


The strategy in \cite{MRW} passes through a maximal function with absolute values \emph{outside} the integral, and then uses H\"older inequality. Namely, they focus on maximal functions of the form 
\[ 
\mathcal{M}f(x) = \sup_{\substack{R \text{ axis parallel}, \\ \text{centered at} x}} \left| \int \chi_R(y) \widehat{f}(y) \,\mmd y\right|,
\]
where $\chi_R \in \mathcal{S}(\R)$ is a smooth bump function adapted to the rectangle $R$. They then prove that, for the \emph{whole} restriction range $1 \le p < \frac{4}{3}$ and $p' \ge 3q,$ 
\[ 
\|\mathcal{M}f\|_{L^q(\mmd \sigma)} \le C_{p,\Gamma}\|f\|_{L^p(\R^2)},
\]
where $\sigma$ stands again for the arclength measure on the curve $\Gamma$. Finally, in order to prove Theorem \ref{teoMRW}, the authors bound the maximal function 
\begin{equation}\label{grandfourier} 
M_{\mathcal{R}}f(t) = \sup_{\substack{R \text{ axis parallel}, \\ \text{centered at} x}} \int \chi_R(y) |\widehat{f}(y)| \, \mmd y
\end{equation}
by $(\mathcal{M}h(x))^{1/2}$, where $h = f * \tilde{f},$ with $\tilde{f}(x,y) = f(-x,-y).$ \\ 

In order to prove Theorem \ref{improvement}, it suffices to bound \eqref{grandfourier} from $L^r(\R^2)$ to some $L^q(\mathbb{S}^1),$ where $1 \le r < \frac{4}{3},$ as the stated property holds trivially in the class $\mathcal{S}(\R^2).$ By nature of such an approximation argument, it sufficies to prove these bound for functions $f \in \mathcal{S}(\R^2)$. \\

For fixed $g$ with $\|g\|_\infty=1$ and measurable choice $R$ of axis-parallel rectangles, define the linearised maximal operator
\begin{equation}\label{linearr}
 M_{g,R}f(x)=\int_{\R^2} |R(x)|^{-1} 1_{R(x)}(y-x)\widehat{f}(y) g(y)\, \mmd y
\end{equation}
acting initially, say, on functions in $L^1(\R^2)\cap L^2(\R^2)$. Setting $g(y) = \frac{\overline{\widehat{f}}}{|\widehat{f}|}$
where $\widehat{f} \ne 0,$ and zero otherwise, together with measurable $R$ such that $M_{g,R}f(t) \ge \frac{1}{2} M_{\mathcal{R}}f,$ implies that it is in turn sufficient to estimate \eqref{linearr} from $L^r(\R^2)$ to some $L^q(\mathbb{S}^1).$ This is the basic goal of Lemmata \ref{lemma1} and \ref{lemma2}. \\ 

Following \cite{MRW}, M. Vitturi \cite{Vitturi} and V. Kovac and D. Oliveira e Silva \cite{KovacOliveira} have proved, as a consequence of $p'=4$  being admissible for the restriction estimate, results in dimensions $\ge 3$
: they have obtained that, in the same range of exponents as in Theorem \ref{dim3more}, one gets \emph{pointwise convergence} $\chi_{\varepsilon} * \widehat{f} \to \widehat{f}$ for $\sigma-$almost every point on the sphere $\mathbb{S}^{d-1},$ where $\chi_{\varepsilon}(y) = \frac{1}{\varepsilon^n} \chi(y/\varepsilon),$ 
and $\chi \in \mathcal{S}(\R^d).$ Although this is already present in \cite{Vitturi} and in both cases the techniques also imply the same result for $\chi = 1_{B(0,1)},$ the ideas in \cite{KovacOliveira} represent a stronger, quantitative form of such a theorem, as they consider \emph{variation norms} instead of suprema. \\ 

Our second result is also an improvement on Vitturi's techniques, yet in another direction:

\begin{theorem}\label{VitturiImp} Let $f \in L^p(\R^d), \, 1 \le p \le \frac{4}{3}.$ Then $\sigma-$almost every point of $\mathbb{S}^{d-1}$ is a Lebesgue point of $\widehat{f},$ and the regularised value of $\widehat{f}$ at $x$ coincides with the restriction operator
$\mathcal{R}f(x)$ for almost every $x \in \mathbb{S}^{d-1}.$ 
\end{theorem}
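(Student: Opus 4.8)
The plan is to deduce the Lebesgue-point statement from a single maximal inequality, exactly as in the reductions preceding Theorem~\ref{teoMRW}, and then to establish that inequality by marrying the linearisation device of \eqref{linearr} to Vitturi's scale-by-scale argument. First I would reduce, using the density of $\mathcal{S}(\R^d)$ in $L^p(\R^d)$ together with the standard Stein maximal principle, the whole statement to the a priori bound
\[
\|M_{\mathcal{R}}f\|_{L^2(\mathbb{S}^{d-1},\,\mmd\sigma)}\le C\,\|f\|_{L^p(\R^d)},\qquad 1<p\le\tfrac43,
\]
for the maximal operator $M_{\mathcal{R}}$ of \eqref{grandfourier} (absolute values inside), the case $p=1$ being trivial since then $\widehat{f}$ is continuous. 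For Schwartz $f$ the conclusion holds pointwise, so the maximal bound upgrades it to the full $L^p$ class; it is precisely the presence of the absolute values inside the average that promotes the mere convergence $\chi_\varepsilon*\widehat{f}\to\widehat{f}$ to the genuine Lebesgue-point assertion.

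Next I would linearise. Choosing $g=\overline{\widehat{f}}/|\widehat{f}|$ and a measurable selection of radii $\varepsilon(\cdot)$ with $M_{g,\varepsilon(\cdot)}f\ge\frac12 M_{\mathcal{R}}f$, in the spirit of Lemmata~\ref{lemma1} and \ref{lemma2}, it suffices to bound the linear operator
\[
M_{g,\varepsilon(\cdot)}f(x)=\bigl(\chi_{\varepsilon(x)}*(g\,\widehat{f})\bigr)(x),\qquad x\in\mathbb{S}^{d-1},
\]
from $L^p(\R^d)$ to $L^2(\mathbb{S}^{d-1})$, uniformly over all $g$ with $\|g\|_\infty\le 1$ and all measurable $\varepsilon(\cdot)$. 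I would stress at this point that one is forced to work with the exponent $q=2$ on the sphere: the factor $g$ is a modulus-one Fourier multiplier, hence bounded on $L^2$ but not on $L^p$ for $p\neq 2$, so only an argument resting on $L^2$-orthogonality can tolerate it.

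The core is then Vitturi's scale argument, run for the linearised operator and anchored at the Stein--Tomas endpoint. Writing $\phi_x(s)=(\chi_s*(g\widehat{f}))(x)$, one has $\phi_x(0^+)=g(x)\widehat{f}(x)$ and $\phi_x(\infty)=0$, so the fundamental theorem of calculus in the logarithmic scale gives
\[
\sup_{s>0}|\phi_x(s)|\le |g(x)\widehat{f}(x)|+\int_0^1\bigl|\widetilde{\chi}_s*(g\widehat{f})(x)\bigr|\,\frac{\mmd s}{s}+\sup_{s\ge 1}|\phi_x(s)|,
\]
where $\widetilde{\chi}_s=s\,\partial_s\chi_s$ is a mean-zero bump, i.e.\ a genuine Littlewood--Paley kernel. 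The first term is harmless, since $\||g\widehat{f}|\|_{L^2(\sigma)}=\|\mathcal{R}f\|_{L^2(\sigma)}\le C\|f\|_{4/3}$ by the Stein--Tomas restriction estimate, which at the dual exponent $p'=4$ is admissible precisely when $d\ge 3$ (as $\tfrac43\le\tfrac{2(d+1)}{d+3}\Leftrightarrow d\ge 3$). For the genuinely maximal middle term I would pass to the square function $\bigl(\int_0^\infty|\widetilde{\chi}_s*(g\widehat{f})(x)|^2\,\tfrac{\mmd s}{s}\bigr)^{1/2}$, expand its $L^2(\sigma)$ norm by Fubini into a scale integral of fixed-scale restriction norms, and exploit the Plancherel-type orthogonality in the scale variable afforded by the mean-zero kernels $\widetilde{\chi}_s$ together with the $p'=4$ extension estimate $\|\widehat{b\,\mmd\sigma}\|_{L^4(\R^d)}\le C\|b\|_{L^2(\sigma)}$; the mean-zero property moreover supplies the decay as $s\to 0$ that makes the logarithmic scale integral convergent. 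The tail $\sup_{s\ge 1}$ involves only wide bumps and yields to the same mechanism after one further integration by parts in $s$.

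The hard part will be this middle step: reconciling the modulus-one multiplier $g$ with the passage from the fixed-scale Stein--Tomas inequality to the maximal, variable-scale estimate. Because $g$ destroys every $L^p$-multiplier bound for $p\neq 2$, one cannot simply invoke a per-scale restriction inequality applied to the function whose Fourier transform is $g\widehat{f}$; the gain from curvature must instead be extracted at the level of the $L^2(\sigma)$ inner product, where $|g|=1$ is invisible, by organising the estimate as a $TT^*$/bilinear computation in which the decay of $\widehat{\mmd\sigma}$ and the bump factors $\chi_{\varepsilon(x)}$ combine. Controlling this $TT^*$ kernel uniformly over all measurable selections $\varepsilon(\cdot)$ --- with no slack at the endpoint $p=\tfrac43$, where Stein--Tomas is sharp --- is the decisive technical point, and it is exactly here that the linearisation trick of \eqref{linearr} earns its keep.
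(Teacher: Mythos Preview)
Your reduction and linearisation are exactly right, and you have correctly identified the decisive obstacle: once the modulus-one multiplier $g$ is present, one cannot apply Stein--Tomas scale by scale, since the function whose Fourier transform is $g\widehat{f}$ lies only in $L^2$, not in $L^{4/3}$. But your resolution of this obstacle remains a promissory note. The sentence ``organising the estimate as a $TT^*$/bilinear computation in which the decay of $\widehat{\mmd\sigma}$ and the bump factors $\chi_{\varepsilon(x)}$ combine'' is not a proof; it is precisely the hard step, and until you actually carry out that kernel estimate uniformly over all measurable selections $\varepsilon(\cdot)$ at the sharp endpoint $p=4/3$, the argument is incomplete. Your square-function route via the fundamental theorem of calculus in scale (closer in spirit to \cite{KovacOliveira} than to \cite{Vitturi}) does not, as written, exhibit any mechanism by which $g$ becomes invisible.

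The paper sidesteps the whole difficulty with a much shorter device. Instead of decomposing in scale on the direct side, it passes to the adjoint $\mathfrak{A}^*_{\varepsilon(\cdot),g}h(\xi)=\int_{\mathbb{S}^{d-1}} G(x,\xi)e^{2\pi i\xi\cdot x}h(x)\,\mmd\sigma(x)$ and estimates its $L^4(\R^d)$ norm by writing $\|\cdot\|_4^2=\|\,(\cdot)^{\widehat{\ }}*(\cdot)^{\widehat{\ }}\,\|_2$. A direct computation gives $(\mathfrak{A}^*_{\varepsilon(\cdot),g}h)^{\widehat{\ }}(\eta)=g(\eta)\,T_{\varepsilon(\cdot)}h(\eta)$, where $T_{\varepsilon(\cdot)}h(\eta)=\int_{\mathbb{S}^{d-1}}h(x)\chi_{B(0,\varepsilon(x))}(\eta-x)\,\mmd\sigma(x)$. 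Since $|g|\le 1$ and the convolution has a non-negative kernel, one gets the pointwise domination
\[
\bigl|(\mathfrak{A}^*_{\varepsilon(\cdot),g}h)^{\widehat{\ }}*(\mathfrak{A}^*_{\varepsilon(\cdot),g}h)^{\widehat{\ }}\bigr|\le (T_{\varepsilon(\cdot)}|h|)*(T_{\varepsilon(\cdot)}|h|),
\]
so $g$ disappears for free and one is reduced to the estimate already established in \cite{Vitturi}. This is the same adjoint/$L^4$-squaring philosophy as in Lemma~\ref{lemma2}, and it is what makes the linearisation trick pay off: on the Fourier side of the adjoint, $g$ appears as a pure pointwise factor rather than as a multiplier, and can be discarded by absolute values. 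Your direct approach never reaches a point where $g$ decouples in this way, which is why you are left holding the hard $TT^*$ computation.
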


The argument to prove Theorem \ref{VitturiImp} is similar to the one employed to treat Theorem \ref{improvement}, and we postpone it to the end of this manuscript. 

\section{Main Argument}
Call a measurable function $a$ in $\R^d$ \emph{bump function} if
there exists an axis parallel rectangle $R$ centered at the origin with
$$|a|\le |R|^{-1}1_R.$$
Convolution with such a bump
function satisfies a pointwise bound by the strong Hardy Littlewood maximal function, uniformly in the rectangle. The following lemma
concerns an adjoint of a linearised maximal operator, 
combined with a Fourier transform. 

\begin{lemma}\label{lemma1}
For each $x \in \R^d$ let $a_x$ be a convolution product of $k$ bump functions. Assume further that ${a_x}(y)$, as function in $(x,y)$, is in $ L^\infty(x, L^1(y))$. Let  $T$ be defined on functions 
$f\in L^2(\R^d)\cap L^1(\R^d)$ by
$$Tf(\xi)=\int_{\R^d} \widehat{a}_x(\xi) e^{2\pi i x  \cdot \xi}  f(x)\, \mmd x .$$
Then, for some universal constant $C$ depending on $k$ and $d$ only, 
$$\|Tf\|_2\le C \|f\|_2 .$$
\end{lemma}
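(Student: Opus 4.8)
The plan is to remove the Fourier transform by Plancherel, reducing the bound to an off-diagonal variable-kernel average, and then to dominate the adjoint of that average pointwise by an iterate of the strong maximal function. First I would expand $\widehat{a}_x(\xi)=\int_{\R^d} a_x(y)e^{-2\pi i y\cdot\xi}\,\mmd y$ and interchange the order of integration, so that for $f\in L^1(\R^d)\cap L^2(\R^d)$, after the substitution $z=x-y$,
\begin{equation*}
Tf(\xi)=\int_{\R^d}\Big(\int_{\R^d} a_x(x-z)\,f(x)\,\mmd x\Big)e^{2\pi i z\cdot\xi}\,\mmd z=:\check{G}(\xi),\qquad G(z)=\int_{\R^d} a_x(x-z)\,f(x)\,\mmd x.
\end{equation*}
The hypothesis $a_x\in L^\infty(x,L^1(y))$ gives $\|\widehat{a}_x\|_\infty\le\sup_x\|a_x\|_1<\infty$, which makes these integrals converge absolutely and legitimises Fubini (rigorously, I would first take $f$ and the dual function $h$ below to be Schwartz and extend by density at the end). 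Plancherel then yields the identity $\|Tf\|_2=\|\check{G}\|_2=\|G\|_2$, so it remains to prove $\|G\|_2\le C\|f\|_2$.

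Next I would estimate $\|G\|_2$ by duality. For $h\in L^2(\R^d)$ with $\|h\|_2=1$, writing out $\langle G,h\rangle$ and substituting $w=x-z$ gives $\langle G,h\rangle=\int_{\R^d} f(x)\,\overline{H(x)}\,\mmd x$, where $H(x)=\int_{\R^d}\overline{a_x(w)}\,h(x-w)\,\mmd w$. The crux of the argument is the pointwise bound
\begin{equation*}
|H(x)|\le C^{k}\,(M^{k}h)(x),
\end{equation*}
where $M$ is the strong (multiparameter) Hardy--Littlewood maximal function and $M^{k}$ its $k$-fold iterate. To prove it, I would write $a_x=b_1*\cdots*b_k$ as a convolution of bump functions, so that $|H(x)|\le(|b_1|*\cdots*|b_k|*|h|)(x)$, and then peel off the factors one at a time: since convolution against a bump function is dominated pointwise by $M$ with a constant independent of the underlying rectangle, an induction on $k$ (using the monotonicity of convolution against a nonnegative kernel, together with the fact that the bump bound holds at \emph{every} point) produces the constant $C^k$.

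Finally, because $M$ is bounded on $L^2(\R^d)$, so is $M^k$, whence $\|H\|_2\le C^k\|M\|_{L^2\to L^2}^k\|h\|_2=C'\|h\|_2$; Cauchy--Schwarz gives $|\langle G,h\rangle|\le\|f\|_2\|H\|_2\le C'\|f\|_2$, and taking the supremum over $h$ with $\|h\|_2=1$ completes the proof. The main obstacle, and the reason the argument is not entirely routine, is that the family $a_x$ depends genuinely on the base point $x$: the quantity $(\,|b_1|*\cdots*|b_k|*|h|\,)(x)$ is a convolution whose kernel varies with the very point of evaluation, so Young's inequality is unavailable and a naive Schur test on the kernel $a_x(x-z)$ fails outright, its integral in $x$ being possibly infinite. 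What rescues the estimate is precisely the maximal-function domination, whose constant is uniform over the measurable choice of rectangles (a uniformity that is automatic from $|b_j|\le|R_j|^{-1}1_{R_j}$, forcing $\|b_j\|_1\le1$); this uniformity is exactly what absorbs the $x$-dependence.
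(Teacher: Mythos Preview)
Your proof is correct and follows essentially the same approach as the paper: duality, Plancherel to pass to the spatial side, identification of the inner integral as a $k$-fold convolution against bump functions, pointwise domination by the iterated strong maximal function $M^k$, and the $L^2$ bound for $M$. The only cosmetic difference is that you apply Plancherel first to rewrite $Tf=\check{G}$ and then dualise $G$, whereas the paper pairs $Tf$ directly with $\widehat{g}$ and performs Fubini/Plancherel inside the pairing; the resulting spatial expression and the subsequent $M^k$ argument are identical.
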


\begin{proof}
We set up a duality argument, testing $Tf$ against  an arbitrary function $ \widehat{g}\in L^2(\R^d)\cap L^1(\R^d)$.
We have, by Fubini and Plancherel,
$$ \int_{\R^d}\overline{\widehat{g}(\xi)} \int_{\R^d}  \widehat{a}_x(\xi) e^{2\pi i x  \cdot \xi}  f(x)    \, \mmd x   \mmd\xi=
\int_{\R^d} f(x) \int_{\R^d} \overline{{g}(y)}  {a}_x(y-x) \, \mmd y \mmd x$$
Identifying on the right-hand-side a $k$ fold convolution by bump functions acting on $g$, we estimate the last display by
$$\int_{\R^d}\int_{\R^d}     |f(x)| M^k({g})(x)\, \mmd x
\le \|f\|_2 \|M^k g\|_2\le  C\|f\|_2\|\widehat{g}\|_2\ ,$$
where we have used the strong maximal theorem and Plancherel again.
 Since $\widehat{g}$ was arbitrary, this proves Lemma \ref{lemma1}.

 \end{proof}

The hypotheses  in the next Lemma  
are motivated by the parameterised circle
$$z(t)=(\cos(2\pi t), \sin(2\pi t)) .$$
By the addition theorem for the sine function, we have
$$|\det(z'(t),z'(s))|=4\pi^2 |\sin(2\pi (s-t))| .$$
Note the vanishing of the determinant when the two tangent vectors become parallel
or anti-parallel. Note further that one can recover $t\neq s\in I:=[0,1)$ from  
$$x:=z(t)+z(s).$$
Namely, $x/2$ is the midpoint between $z(t)$ and $z(s)$, and these two points 
on the circle are mirror symmetric relative to the line through this
midpoint and the origin. This determines the two points $t\ne s,$ up to
permutation. Define, therefore, the \emph{upper triangle}
$$\Delta=\{(t,s)\in I\times I: t >s\} .$$

 \begin{lemma}\label{lemma2}
Let $z: \R\to \R^2$ be a 
smooth one-periodic curve such that for all $(t,s)\in \Delta$
\begin{equation}
\label{sine}
|\det(z'(t),z'(s))|\ge c|\sin(2\pi(t-s))|
\end{equation}
and such that the map 
\begin{equation}
\label{bijection}(t,s)\to z(t)+z(s)
\end{equation}  
is a bijection from $\Delta$ onto a bounded set $\Omega\subset \R^2$.
 With $a_{z(t)}$ a bump function for every $t\in I$ such that 
$a_{z(t)}(x)$ is in $L^{\infty}(t,L^1(x))$, consider an operator
acting on functions in $L^4(I)$ as follows:
$$Tf(\xi)=\int_{I} \widehat{a}_{z(t)}(\xi) e^{2\pi  \xi \cdot z(t)}  f(t  )\, \mmd t .$$ 
Then we have for all $1\le  p< 2$ with some constant depending only on $p$:
$$\|Tf\|_{2p'}\le C_p \|f\|_{\frac{2p}{3-p}} ,$$
with the obvious interpretation when $p'=\infty$. Notice, moreover, that the reciprocals $\left(\frac{1}{2p'},\frac{3-p}{2p}\right)$ of the aforementioned exponents 
lie on the line segment joining $(1/4,1/4)$ and $(0,1).$
\end{lemma}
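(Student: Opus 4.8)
The plan is to obtain the whole family of estimates by interpolating the two endpoints $p=1$ and $p=2$, which is exactly what the final sentence of the statement records: the reciprocal pair $\left(\frac{1}{2p'},\frac{3-p}{2p}\right)$ runs along the segment from $(0,1)$ at $p=1$ to $(1/4,1/4)$ at $p=2$. The first move is to recognise $T$ as a Fourier transform in disguise. Writing $g(x)=\int_I a_{z(t)}(x+z(t))f(t)\,\mmd t$ one has $Tf=\widehat{g}$, hence $(Tf)^2=\widehat{g*g}$, and Hausdorff--Young gives, for every $1\le p\le 2$,
\[
\|Tf\|_{2p'}^2=\|(Tf)^2\|_{p'}=\|\widehat{g*g}\|_{p'}\le\|g*g\|_p .
\]
This is the step that converts an $L^{2p'}$ bound on $Tf$ into an $L^p$ bound on the self-convolution and that explains the appearance of the exponent $2p'$.

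Next I would exploit the geometric hypotheses to rewrite $g*g$. A direct computation gives
\[
g*g(x)=2\int_\Delta b_{t,s}\bigl(x+z(t)+z(s)\bigr)f(t)f(s)\,\mmd t\,\mmd s ,
\]
with $b_{t,s}=a_{z(t)}*a_{z(s)}$ a convolution of two bump functions (the factor $2$ coming from symmetrising over the diagonal). Using that \eqref{bijection} is a bijection of $\Delta$ onto $\Omega$, I substitute $u=z(t)+z(s)$, whose Jacobian is $\det(z'(t),z'(s))$; if $\Psi$ denotes the inverse map this turns $g*g$ into $SF(x):=2\int_\Omega b_{\Psi(u)}(x+u)F(u)\,\mmd u$, where $F(u)=f(t(u))f(s(u))/|\det(z'(t),z'(s))|$.

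The operator $S$ is then handled by the previous lemma. It is bounded $L^1(\Omega)\to L^1(\R^2)$ trivially, since $\|b_{\Psi(u)}\|_{1}\le C$ uniformly in $u$ by the $L^\infty(t,L^1(x))$ hypothesis. It is bounded $L^2(\Omega)\to L^2(\R^2)$ because $\widehat{SF}$ is, after Plancherel, precisely an operator of the type treated in Lemma \ref{lemma1}, with $k=2$ and with $2b_{\Psi(u)}$ playing the role of the convolution of bump functions. Interpolating these two endpoints gives $S:L^p(\Omega)\to L^p(\R^2)$ for all $1\le p\le 2$, so that $\|g*g\|_p\le C\|F\|_{L^p(\Omega)}$; it is worth stressing that at this stage the Jacobian has produced no difficulty, because it lives entirely in the function $F$ and not in the operator $S$.

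The last step, and the one I expect to be the crux, is to estimate $\|F\|_{L^p(\Omega)}$. Unwinding the change of variables and invoking the lower bound \eqref{sine},
\[
\|F\|_{L^p(\Omega)}^p=\int_\Delta\frac{|f(t)|^p|f(s)|^p}{|\det(z'(t),z'(s))|^{p-1}}\,\mmd t\,\mmd s\le C\int_\Delta\frac{|f(t)|^p|f(s)|^p}{|\sin(2\pi(t-s))|^{p-1}}\,\mmd t\,\mmd s .
\]
For $1<p<2$ the kernel $|\sin(2\pi(t-s))|^{-(p-1)}$ has locally integrable singularities of order $p-1<1$ along $t=s$ and along $t-s=1/2$, and the Hardy--Littlewood--Sobolev inequality bounds the right-hand side by $C\|\,|f|^p\,\|_{2/(3-p)}^2=C\|f\|_{2p/(3-p)}^{2p}$; combining the three displays yields $\|Tf\|_{2p'}\le C_p\|f\|_{2p/(3-p)}$. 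The endpoint $p=1$ fits as the degenerate case in which the Jacobian cancels and no singularity is present, while $p=2$ is the other interpolation endpoint. The genuine obstacle is exactly this weighted bilinear estimate: the Jacobian $\det(z'(t),z'(s))$ degenerates precisely where the tangent vectors become parallel or anti-parallel, and only the strict inequality $p<2$ keeps the order of the singularity below $1$. At $p=2$ the kernel degenerates to $|\sin(2\pi(t-s))|^{-1}$ and the bilinear form diverges logarithmically, which is the reason the endpoint must be excluded; the hypotheses \eqref{sine} and \eqref{bijection} are tailored precisely so that this is the only point of failure.
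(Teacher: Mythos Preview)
Your proof is correct and follows essentially the same route as the paper: square $Tf$ \`a la Carleson--Sj\"olin, use the bijection \eqref{bijection} to pass to a two-dimensional integral over $\Omega$, invoke Lemma~\ref{lemma1} at the $L^2$ endpoint, interpolate, and finish with Hardy--Littlewood--Sobolev to handle the Jacobian singularity $|\det(z'(t),z'(s))|^{1-p}$. The only cosmetic difference is that you pass to physical space (writing $Tf=\widehat{g}$, computing $g*g$, and inserting Hausdorff--Young before interpolating $S:L^p\to L^p$), whereas the paper stays on the Fourier side throughout and interpolates the map $F\mapsto (Tf)^2$ directly between $L^1\to L^\infty$ and $L^2\to L^2$; the substance is identical.
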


\begin{proof}
To reduce to Lemma \ref{lemma1}, we need to pass to a two dimensional integral. We follow the idea of Carleson-Sj\"olin and consider the square 
 $$Tf(\xi)^2=\int_{I\times I} \widehat{a}_{z(t)}(\xi) \widehat{a}_{z(s)}(\xi)e^{2\pi  \xi \cdot (z(t)+z(s))}  f(t ) {f(s)}\, \mmd t \mmd s . $$
The integral is twice the  analoguous integral over
the triangle $\Delta$, where we change coordinates by the bijective map \eqref{bijection} to obtain

$$Tf(\xi)^2= 2 \int_\Omega  \widehat{b}_{x}(\xi)e^{2\pi i \xi \cdot  x}  g(x)\, \mmd x  .$$
Here  we have unambiguously defined, for $(t,s)$ in the triangle,
$$\widehat{b}_{z(t)+z(s)}:=\widehat{a}_{z(t)}   {\widehat{a}_{z(s)}} ,$$
$$g(z(t)+z(s)):=f(t)  {f(s)}|\det(z'(t),z'(s))|^{-1} .$$
Note that the determinant here is the Jacobian determinant of the
map \eqref{bijection}.

It is now easy to prove, by interpolation, that for $1\le p\le 2$
we have   
$$\|Tf\|_{2p'}^{2p}=\|(Tf)^2\|_{p'}^p\le C\|g\|_p^p\ .$$
Namely,  $p=2$ follows directly from Lemma \ref{lemma1} applied to a function supported on $\Omega$, and $p=1$ is trivial since  $\|\widehat{b}_x\|_\infty\le C$.
To conclude the proof of the lemma, we invert  
the change of variables to estimate the right-hand-side for $1\le p<2:$ 
$$\int_{\Omega} |g(x)|^p \, \mmd x= \int_{\Delta}    |f(t  )f(s)|^{p} 
|\det(z'(t),z'(s))|^{1-p}
\, \mmd t \mmd s   \le   C_p\||f|^p\|^{2}_{\frac{2}{3-p}} = C_p \|f\|_{\frac{2p}{3-p}}^{2p}.$$
Here, the last inequality follows from the Hardy--Littlewood--Sobolev inequality for fractional integrals. Namely, we estimate with \eqref{sine} on the triangle:
$$|\det(z'(t),z'(s))|^{1-p} \le C \sum_{k=-2}^{2} |t-s-k|^{1-p} , $$ 
and we note that each summand leads to a translated fractional integral.
\end{proof}

\begin{proof}[Proof of Theorem \ref{improvement}] 
We introduce the bump function 
$$a_x(y):=|R(x)|^{-1} 1_{R(x)}(y)\overline{g(x-y)},$$
and write
$$M_{g,R}f(t)=\int_{\R^2} \overline{a_{z(t)}} (y-z(t))\widehat{f}(y) \, \mmd y.$$ 
This is just a composition of the operator in \eqref{linearr} with a parametrisation, so we identify them. By Plancherel, similarly to the proof of Lemma \ref{lemma1}, we have
$$M_{g,R}f(t)=\int_{\R^2} \overline{\widehat{a}_{z(t)}} (\xi) e^{-2\pi i \xi \cdot z(t)}f(\xi) \, \mmd y.$$ 
The adjoint operator then becomes
 $$M_{g,R}^*(h)(\xi)=\int_{I} \widehat{a}_{z(t)} (\xi)e^{2\pi i \xi \cdot z(t)}h(t) \, \mmd t.$$ 
 By Lemma \ref{lemma2}, this is bounded from $L^{\frac{2p}{3-p}}$ to $L^{2p'}$
 for $p<2$. We set now $r = (2p')'.$ By a computation, $\frac{2p}{3-p} = (r'/3)'.$ With this notation, we have that $M_{g,R}$ is bounded from $L^r(\R^2)$ to $L^{r'/3}(\mathbb{S}^1)$
 for all $r < \frac{4}{3},$ which is already what we wished to prove. Recall, moreover, that this implies $L^r(\R^2) \to L^q(\mathbb{S}^1)$ estimates in the optimal two-dimensional restriction range 
 $1 \le r < \frac{4}{3}, r' \ge 3q.$

\end{proof}

\section{The high-dimensional result} 

Just like we employed our techniques to deal with the two-dimensional case, we adapt the arguments by M. Vitturi \cite{Vitturi} to achieve high-dimensional estimates. We briefly sketch on how to do it.

\begin{theorem}\label{dim3more} Let $d \ge 3,$ and $$ \mathfrak{M}f(x) = \sup_{0 < \varepsilon \le 1} \dashint_{B(0,\varepsilon)} |\widehat{f}(x + y)| \mmd y.$$  
Then it holds that 

\[
\| \mathfrak{M}f\|_{L^q(\mathbb{S}^{d-1})} \le C_{p,q,d} \|f\|_{L^p(\R^{d})},
\]
where $1 \le p \le \frac{4}{3}$ and $p' \ge \frac{d+1}{d-1}q.$ 
\end{theorem}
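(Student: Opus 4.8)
The plan is to follow the scheme of Theorem \ref{improvement}, but to replace the Carleson--Sj\"olin squaring---which in dimension $d\ge 3$ no longer identifies $\mathbb{S}^{d-1}+\mathbb{S}^{d-1}$ with a full--dimensional piece of $\R^d$, since $2(d-1)>d$---by the Stein--Tomas inequality together with Vitturi's square function in the radial variable. It suffices to treat $f\in\mathcal{S}(\R^d)$. First I would linearise the absolute value exactly as in the two--dimensional case: with $g=\overline{\widehat f}/|\widehat f|$ on $\{\widehat f\ne 0\}$ and $0$ otherwise we have $|\widehat f|=\widehat f\,g$, so that, writing $\phi_\varepsilon=|B(0,\varepsilon)|^{-1}1_{B(0,\varepsilon)}$,
\[
\mathfrak{M}f(x)=\sup_{0<\varepsilon\le 1}\bigl(\phi_\varepsilon*(\widehat f\,g)\bigr)(x),
\]
and it is enough to bound $\sup_\varepsilon|\phi_\varepsilon*(\widehat f\,g)|$ in $L^q(\mathbb{S}^{d-1})$ uniformly over all measurable $g$ with $\|g\|_\infty\le 1$. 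Keeping the phase inside the integral is the crucial point: estimating the average of $|\widehat f|$ crudely by $(\phi_\varepsilon*|\widehat f|^2)^{1/2}$ and passing to $h=f*\tilde f$ as in \cite{MRW} loses a square root and would only reach $p<\tfrac87$, whereas the linearised operator stays in the full Stein--Tomas range.

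Set $\Phi=\widehat f\,g$. The plan is then to obtain everything from the single endpoint
\[
\Bigl\|\sup_{0<\varepsilon\le 1}|\phi_\varepsilon*\Phi|\Bigr\|_{L^2(\mathbb{S}^{d-1})}\le C\,\|f\|_{L^{p_0}(\R^d)},\qquad p_0=\tfrac{2(d+1)}{d+3},
\]
the Stein--Tomas exponent, for which $p_0'=\tfrac{d+1}{d-1}\cdot 2$. Granting this, the linear operators $f\mapsto\phi_{\varepsilon(x)}*(\widehat f\,g)(x)$ attached to measurable radius selections $\varepsilon(\cdot)$ are pointwise dominated by the above supremum, hence bounded $L^{p_0}\to L^2(\mathbb{S}^{d-1})$ uniformly in $g$ and $\varepsilon(\cdot)$; they also satisfy $L^1\to L^\infty(\mathbb{S}^{d-1})$ trivially, since $\|\widehat f\,g\|_\infty\le\|f\|_1$. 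Complex interpolation between these two bounds gives, uniformly, the sharp line $q=\tfrac{d-1}{d+1}p'$ for every $q\ge 2$ (both endpoints lie on it), and the embedding $L^{q}(\mathbb{S}^{d-1})\hookrightarrow L^{q_1}(\mathbb{S}^{d-1})$ for $q_1\le q$ then fills in the whole region $p'\ge\tfrac{d+1}{d-1}q$, $1\le p\le\tfrac43$; note $\tfrac43\le p_0$ precisely when $d\ge 3$. Re--selecting $\varepsilon(x)$ so as to nearly realise the supremum recovers $\mathfrak{M}f$ itself.

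For the endpoint I would run the radial square--function argument. Differentiating the ball average gives $\varepsilon\,\partial_\varepsilon(\phi_\varepsilon*\Phi)=d\,(\sigma_\varepsilon-\phi_\varepsilon)*\Phi=:\nu_\varepsilon*\Phi$, with $\sigma_\varepsilon$ the normalised surface measure of $\partial B(0,\varepsilon)$; since $\widehat{\nu_\varepsilon}(0)=0$, the piece $\nu_\varepsilon*\Phi$ is a genuine Littlewood--Paley object. The individual scales are insensitive to the phase: because $\phi_\varepsilon$ and $\sigma_\varepsilon$ are probability measures, Jensen's inequality gives $|\phi_\varepsilon*\Phi|^2\le\phi_\varepsilon*|\Phi|^2$ (and likewise for $\sigma_\varepsilon$), whence, using $|\Phi|\le|\widehat f|$, Fubini, and the overlap bound $\int_{\mathbb{S}^{d-1}}\phi_\varepsilon(x-w)\,\mmd\sigma(x)\lesssim\varepsilon^{-1}$ for $w$ in the $\varepsilon$--neighbourhood $N_\varepsilon(\mathbb{S}^{d-1})$ (and its analogue for $\sigma_\varepsilon$ by transversality of the two spheres),
\[
\|\phi_\varepsilon*\Phi\|_{L^2(\mathbb{S}^{d-1})}^2\lesssim\varepsilon^{-1}\!\int_{N_\varepsilon(\mathbb{S}^{d-1})}|\widehat f|^2\lesssim\|f\|_{L^{p_0}}^2,
\]
the last inequality being the $\varepsilon$--neighbourhood form of Stein--Tomas. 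This disposes of the $\varepsilon=1$ term.

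The main obstacle is to pass from the single scales to the supremum, i.e.\ to bound the Littlewood--Paley square function
\[
\Bigl\|\Bigl(\int_0^1|\nu_\varepsilon*\Phi|^2\,\tfrac{\mmd\varepsilon}{\varepsilon}\Bigr)^{1/2}\Bigr\|_{L^2(\mathbb{S}^{d-1})}\lesssim\|f\|_{L^{p_0}},
\]
which is exactly the estimate one adapts from \cite{Vitturi}. Here the two devices above are in tension: the scale--by--scale bound, applied to $\sigma_\varepsilon$ and $\phi_\varepsilon$ separately, produces $\int_0^1\varepsilon^{-1}(\int_{N_\varepsilon}|\widehat f|^2)\tfrac{\mmd\varepsilon}{\varepsilon}$, which diverges, because splitting $\nu_\varepsilon$ discards the cancellation carried by the vanishing $\widehat{\nu_\varepsilon}(0)=0$, which underlies the orthogonality $\int_0^\infty|\widehat{\nu_\varepsilon}(\xi)|^2\tfrac{\mmd\varepsilon}{\varepsilon}\lesssim 1$. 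To keep this cancellation one must estimate the kernel $\int_0^1\!\int_{\mathbb{S}^{d-1}}\nu_\varepsilon(x-w)\overline{\nu_\varepsilon(x-w')}\,\mmd\sigma(x)\,\tfrac{\mmd\varepsilon}{\varepsilon}$ without passing to absolute values, and the delicate feature is precisely that the arbitrary unimodular phase $g$ hidden in $\Phi$ obscures the frequency localisation on which that orthogonality rests, so the Jensen reduction to the phase--free quantity $\int_{N_\varepsilon}|\widehat f|^2$ is no longer available. Finally, controlling the genuine supremum rather than the square function is an endpoint matter: a dyadic decomposition handles the supremum inside each block by the $\ell^2$ square function, but the supremum across blocks requires either a harmless logarithmic factor or the variation--norm refinement of Kovac and Oliveira e Silva \cite{KovacOliveira}.
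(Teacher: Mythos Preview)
Your proposal contains a genuine gap, which you yourself identify: after linearising the absolute value via $g=\overline{\widehat f}/|\widehat f|$ and setting $\Phi=\widehat f\,g$, you cannot close the square--function estimate $\|(\int_0^1|\nu_\varepsilon*\Phi|^2\,\tfrac{\mmd\varepsilon}{\varepsilon})^{1/2}\|_{L^2(\mathbb{S}^{d-1})}\lesssim\|f\|_{L^{p_0}}$, because the phase $g$ destroys the almost--orthogonality in $\varepsilon$ and the Jensen reduction to $\int_{N_\varepsilon}|\widehat f|^2$ is not compatible with keeping $\nu_\varepsilon$ intact. You stop at the obstacle rather than resolve it, so the proof is incomplete.

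The paper sidesteps this obstacle by a different, and much shorter, argument. After the same linearisation $\mathfrak{A}_{\varepsilon(\cdot),g}f(x)=\dashint_{B(0,\varepsilon(x))}\widehat f(x+y)g(x+y)\,\mmd y$, it passes to the adjoint and estimates its $L^4(\R^d)$ norm. The key computation is that the spatial Fourier transform of the adjoint factorises:
\[
(\mathfrak{A}^*_{\varepsilon(\cdot),g}h)^{\widehat{\ }}(\eta)=g(\eta)\int_{\mathbb{S}^{d-1}}h(x)\,\chi_{B(0,\varepsilon(x))}(\eta-x)\,\mmd\sigma(x)=:g(\eta)\,T_{\varepsilon(\cdot)}h(\eta),
\]
because the variable of $g$ in $g(x+y)$ becomes exactly $\eta$ after Fourier inversion in $\xi$. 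Since $\|\mathfrak{A}^*h\|_4^2=\|(\mathfrak{A}^*h)^{\widehat{\ }}*(\mathfrak{A}^*h)^{\widehat{\ }}\|_2$ and $|g|\le 1$, one has the pointwise bound
\[
\bigl|(\mathfrak{A}^*h)^{\widehat{\ }}*(\mathfrak{A}^*h)^{\widehat{\ }}\bigr|\le (T_{\varepsilon(\cdot)}|h|)*(T_{\varepsilon(\cdot)}|h|),
\]
and the right--hand side is exactly the phase--free quantity already controlled in Vitturi's proof. Thus the phase $g$ is removed \emph{before} any orthogonality or Stein--Tomas input is used, rather than after. This is precisely the missing idea in your approach: instead of trying to carry $g$ through a square--function argument on $\mathbb{S}^{d-1}$, dualise so that $g$ appears as a bounded pointwise multiplier on the Fourier side of the adjoint, where it is harmless.
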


\begin{proof} 
First, write the auxiliary bilinear operator 
\[
\mathfrak{M}(f;g)(x) = \sup_{0 \le \varepsilon \le 1} \left| \dashint_{B(0,\varepsilon)} \widehat{f}(x+y) g(x+y) \, \mmd y\right|. 
\]
Letting $\mathfrak{A}_{\varepsilon(\cdot),g}f(x) = \dashint_{B(0,\varepsilon(x))}  \widehat{f}(x+y) g(x+y) \,\mmd y$ be the linearised operator for suitable measurable $g,\varepsilon,$ $\|g\|_{\infty} =1$. Its adjoint has the form 
\[
\mathfrak{A}^*_{\varepsilon(\cdot),g}h(\xi) = \int_{\mathbb{S}^{d-1}} G(x,\xi) e^{2 \pi i \xi \cdot x} h(x)\, \mmd \sigma(x),
\]
$\sigma$ standing for the surface measure on the $(d-1)-$dimensional sphere, and $G(x,\xi) = \mathcal{F}(g(x + \cdot)\chi_{B(0,\varepsilon(x))})(\xi).$ Following Vitturi's arguments and the ones in the proof of Theorem \ref{improvement},
it is enough to prove the following estimate: 
\[
\|\mathfrak{A}^*_{\varepsilon(\cdot),g}h\|_{L^4(\R^d)} \le C_{q,d} \| h\|_{L^{q_d'}(\mathbb{S}^{d-1})},
\]
where $q_d = 4\frac{d+1}{d-1}.$ Now we write the $L^4$ norm as a (square root of a) $L^2$ norm of the convolution of the Fourier transform $(\mathfrak{A}^*_{\varepsilon(\cdot),g}h)^{\widehat{ }}$ with itself. With this in mind, one gets from a calculation that 
\[
(\mathfrak{A}^*_{\varepsilon(\cdot),g}h)^{\widehat{ }}(\eta) = g(\eta) \int_{\mathbb{S}^{d-1}} h(x) \chi_{B(0,\varepsilon(x))}(\eta-x) \mmd x =: g(\eta) T_{\varepsilon(\cdot)}h(\eta).
\]
We are then able to bound 
\[ 
|(\mathfrak{A}^*_{\varepsilon(\cdot),g}h)^{\widehat{ }} * (\mathfrak{A}^*_{\varepsilon(\cdot),g}h)^{\widehat{ }} (\rho)| \le |( T_{\varepsilon(\cdot)}|h|)*( T_{\varepsilon(\cdot)}|h|)(\rho)|.
\]
But the operator on the right hand side has been already treated in Vitturi's proof, and therefore we can conclude the desired bounds from the ones in \cite{Vitturi}. 
\end{proof} 

\section{Acknowledgements} The author is deeply indebted to his doctoral advisor, Prof. Dr. Christoph Thiele, for the time devoted to thorough inspection of the arguments, careful understanding of ideas and several suggestions that led to the final 
version of this manuscript. He would also like to akcnowledge the interest in this project manifested by some of his colleagues, among which he would like to thank especially Marco Fraccaroli and Gennady Uraltsev for discussions about their work \cite{FraccUraltsev}, as well as Diogo Oliveira e Silva and Vjekoslav Kovac,
which, besides sharing their recent project \cite{KovacOliveira}, also took the time carefully read and make valuable suggestions on an early version of this work.

\end{document}